\documentclass[11pt]{article}%
\usepackage{amssymb}
\usepackage{euler}
\usepackage{amsmath}
\usepackage{amsfonts}
\usepackage{graphicx}%
\providecommand{\U}[1]{\protect\rule{.1in}{.1in}}
\newtheorem{theorem}{Theorem}

\newtheorem{proposition}[theorem]{Proposition}

\newenvironment{proof}[1][Proof]{\noindent\textbf{#1.} }{\ \rule{0.5em}{0.5em}}
\begin{document}

\title{\textsf{Closing in on the kernel of an operator between Banach spaces}}
\author{\textsf{Douglas S. Bridges}}
\maketitle

\begin{abstract}%
%TCIMACRO{\TeXButton{noindent}{\noindent}}%
%BeginExpansion
\noindent
%EndExpansion
\textsf{This note deals with the question: If }$T:X\rightarrow Y$\textsf{\ is
a linear mapping between Banach spaces, }$x\in X$\textsf{, and }$\left\Vert
Tx\right\Vert $\textsf{\ is small, is }$x$\textsf{\ close to the kernel of
}$T$\textsf{? It draws on notions of Z-stability, introduced in \cite{BDM2016}%
, and provides an affirmative constructive answer when }$T$\textsf{\ is onto
}$Y$\textsf{, sequentially continuous, and has located kernel.}

\end{abstract}

%

%TCIMACRO{\TeXButton{sf}{\normalfont\sf}}%
%BeginExpansion
\normalfont\sf
%EndExpansion
%

%TCIMACRO{\TeXButton{bigskip}{\bigskip}}%
%BeginExpansion
\bigskip
%EndExpansion
%

%TCIMACRO{\TeXButton{bigskip}{\bigskip}}%
%BeginExpansion
\bigskip
%EndExpansion
%

%TCIMACRO{\TeXButton{noindent}{\noindent}}%
%BeginExpansion
\noindent
%EndExpansion
Working, as we do throughout this note, in Bishop's constructive mathematics,
\textbf{BISH} \cite{Bishop,BVtech,Handbook}, we say that a mapping $f$ of a
metric space $(X,\rho)$ into a normed space $Y$ is

\begin{itemize}
\item[$\bullet$] \textbf{Z-stable at }$x\in X$ if for each $\varepsilon>0$
there exists $\delta>0$ such that if $\left\Vert f(x)\right\Vert <\delta$,
then $\rho(x,z)<\varepsilon$ for some $z\in f^{-1}(0)$;

\item[$\bullet$] \textbf{Z-stable on} $S\subset X$ if it is Z-stable at each
point of $S$; and

\item[$\bullet$] \textbf{uniformly Z-stable on} $S\subset X$ if for each
$\varepsilon>0$ there exists $\delta>0$ such that if $x\in S$ and $\left\Vert
f(x)\right\Vert <\delta$, then $\rho(x,z)<\varepsilon$ for some $z\in
f^{-1}(0)$; and\footnote{%
%TCIMACRO{\TeXButton{sf}{\normalfont\sf}}%
%BeginExpansion
\normalfont\sf
%EndExpansion
For more on Z-stability see \cite{BDM2016}.}

\item[$\bullet$] \textbf{well behaved }if $f(x)\neq0$ whenever\footnote{%
%TCIMACRO{\TeXButton{sf}{\normalfont\sf}}%
%BeginExpansion
\normalfont\sf
%EndExpansion
Recall that the \textbf{complement} (in $X$) of a subset $S$ of $X$ is $%
%TCIMACRO{\TeXButton{mathsim}{\mathord{\sim}}}%
%BeginExpansion
\mathord{\sim}%
%EndExpansion
S\equiv\left\{  x\in X:\forall_{s\in S}(x\neq s)\right\}  $, where the
expression $(x\neq s)$ stands for $(\rho(x,s)>0)$, which is constructively
stronger than $\lnot(x=s)$.} $x\in%
%TCIMACRO{\TeXButton{mathsim}{\mathord{\sim}}}%
%BeginExpansion
\mathord{\sim}%
%EndExpansion
f^{-1}(0)$.
\end{itemize}

The following appear as Propositions 2.1 and 2.2 in \cite{BDM2016}.

\begin{proposition}
\label{apr17p1}If $f$ is well behaved and $f^{-1}(0)$ is both inhabited and
located\footnote{%
%TCIMACRO{\TeXButton{sf}{\normalfont\sf}}%
%BeginExpansion
\normalfont\sf
%EndExpansion
That is, $\rho(x,f^{-1}(0))\equiv\inf\left\{  \rho(x,z):f(z)=0\right\}  $
exists for each $x\in X$.} in $X$, then $f$ is Z-stable.
\end{proposition}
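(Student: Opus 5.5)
The plan is to prove Z-stability at an arbitrary point directly from the definition, using locatedness to make a constructive case split and well-behavedness to handle the ``far from the kernel'' case. Write $Z\equiv f^{-1}(0)$. Because $Z$ is inhabited and located, the distance $d\equiv\rho(x,Z)$ exists as a real number for each $x\in X$. Fix $x\in X$ and $\varepsilon>0$. The $\delta$ we produce is allowed to depend on $x$, since only pointwise Z-stability is claimed.

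First, apply the constructive approximate dichotomy for real numbers to $d$ and the pair $\varepsilon/2<\varepsilon$: either $d<\varepsilon$ or $d>\varepsilon/2$.

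\emph{Case $d<\varepsilon$.} By the definition of the infimum there is $z\in Z$ with $\rho(x,z)<\varepsilon$. Take $\delta=1$. The required conclusion then holds whatever the value of $\Vert f(x)\Vert$.

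\emph{Case $d>\varepsilon/2$.} For every $z\in Z$ we have $\rho(x,z)\geq d>0$, so $x\neq z$ in the strong sense. Hence $x\in\mathord{\sim}Z$. Since $f$ is well behaved, $f(x)\neq0$, that is, $\Vert f(x)\Vert>0$. Take $\delta\equiv\Vert f(x)\Vert$. The hypothesis $\Vert f(x)\Vert<\delta$ is then contradictory, so the implication in the definition of Z-stability holds trivially (ex falso).

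In both cases we have a $\delta>0$ with the required property, so $f$ is Z-stable at $x$. As $x$ was arbitrary, $f$ is Z-stable on $X$.

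The only delicate point is constructive. We cannot decide whether $d=0$ or $d>0$, and must instead use the approximate splitting $d<\varepsilon$ or $d>\varepsilon/2$. We also need the strong inequality $\rho(x,z)>0$ for \emph{all} $z\in Z$, which the lower bound $d>\varepsilon/2$ supplies uniformly. This is exactly what membership in the complement $\mathord{\sim}Z$ requires, and it is why locatedness, not merely closedness, of $Z$ is needed. Inhabitedness of $Z$ is used only to make the infimum meaningful.
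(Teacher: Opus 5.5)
Your proof is correct. The paper gives no proof of this statement and simply quotes it as Proposition 2.1 of its reference BDM2016; your argument is the natural one. You split $\rho(x,f^{-1}(0))$ approximately against $\varepsilon/2<\varepsilon$ and take any $\delta$ in the near case. In the far case you use $x\in\mathord{\sim}f^{-1}(0)$ and well-behavedness to get $\Vert f(x)\Vert>0$, then set $\delta=\Vert f(x)\Vert$ so the hypothesis is vacuous, which gives Z-stability at each point.
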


\begin{proposition}
\label{apr17p2}If $X$ is complete and $f$ is Z-stable, then $f$ is well behaved.
\end{proposition}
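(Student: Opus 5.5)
The plan is to assume that $x$ lies in $\mathord{\sim}f^{-1}(0)$ and to show $\left\Vert f(x)\right\Vert >0$ without appealing to Markov's principle. Completeness will supply a limit point that, in a non-constructive reading, is either $x$ itself or a zero of $f$. Apartness from the zeros, together with Z-stability applied at that limit point, should then give a quantitative bound that decides which case holds.

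First I use Z-stability at $x$ to choose a decreasing sequence $(\delta_{n})$ of positive numbers such that, for each $n$, $\left\Vert f(x)\right\Vert <\delta_{n}$ implies $\rho(x,z)<2^{-n}$ for some $z\in f^{-1}(0)$. I then build an increasing binary sequence $(\lambda_{n})$ by approximate decisions: if $\left\Vert f(x)\right\Vert >\delta_{n}/2$, set $\lambda_{n}=1$. If instead $\left\Vert f(x)\right\Vert <\delta_{n}$, set $\lambda_{n}=0$ and choose a zero $z_{n}$ with $\rho(x,z_{n})<2^{-n}$. Now define $y_{n}=x$ while $\lambda_{n}=0$, and $y_{n}=z_{N-1}$ for all $n\geq N$ once $\lambda_{N}=1$ for the first time (if $N=1$, take $y_{n}=x$ throughout, in which case we are already done). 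Then $\rho(y_{m},y_{n})<2^{-\min(m,n)+1}$, so $(y_{n})$ is Cauchy and converges to some $y\in X$ by completeness. Note also that $\rho(x,y)<2^{-N+1}$ when the switch occurs at $N$, and $y=x$ when it never occurs.

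The classical content is clear. If every $\lambda_{n}=0$, then $\left\Vert f(x)\right\Vert <\delta_{n}$ for all $n$, so $\left\Vert f(x)\right\Vert =0$. Hence $x\in f^{-1}(0)$, which contradicts $\rho(x,x)=0$ and $x\in\mathord{\sim}f^{-1}(0)$. Constructively this gives only $\lnot\forall_{n}(\lambda_{n}=0)$. To obtain an actual $n$ with $\lambda_{n}=1$, I aim to prove $\rho(x,y)>0$. Once $\rho(x,y)>2^{-M}$ for a specific $M$, we see that $y\neq x$, so the switch must have occurred at some $N\leq M+1$. This is a finite check, and the switch yields $\left\Vert f(x)\right\Vert >\delta_{N}/2>0$.

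The main obstacle is establishing $\rho(x,y)>0$. I would attack it through Z-stability at $y$. Heuristically, $f(y)$ is either $0$ (when $y=z_{N-1}$) or $f(x)$ with $\left\Vert f(x)\right\Vert =0$, so $\left\Vert f(y)\right\Vert $ should fall below the $\delta$ given by Z-stability at $y$ for a suitable $\varepsilon$. If so, I obtain a zero $z$ with $\rho(y,z)<\varepsilon$, and apartness of $x$ from $z$ gives $\rho(x,z)>0$. The difficulty is that $\varepsilon$ must be chosen below $\rho(x,z)$ before $z$ is known. I would first try to break this circularity by iterating the construction with the tolerances $2^{-n}$ and the rate of convergence of $(y_{n})$ tuned to the $\delta$ at $y$, or by replacing $y$ with a limit point defined directly from the zeros. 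If a clean bound emerges, the final deduction $\left\Vert f(x)\right\Vert >0$ is routine.
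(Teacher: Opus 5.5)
Your setup works: the sequence $(\lambda_n)$, the frozen sequence $(y_n)$ with limit $y$, and the final finite check from $\rho(x,y)>2^{-M}$ to $\lambda_{M+1}=1$. But the proof stops at its crux. You never establish $\rho(x,y)>0$, and the route you propose through Z-stability at $y$ is circular, as you yourself note. A zero $z$ within $\varepsilon$ of $y$ only helps if $\varepsilon<\rho(x,z)$, and $\varepsilon$ cannot be chosen in advance. Your other suggestion, taking $y$ as a limit of zeros, does not help by itself either. $f$ is not assumed continuous, so $f^{-1}(0)$ need not be closed. (The paper quotes this proposition from an earlier paper without proof, so I am judging the proposal on its own merits.) As written, the argument is incomplete.

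The missing idea is that Z-stability at $y$ is not needed at all, because $y$ is itself a zero of $f$. The statement $f(y)=0$ means $\lnot(\Vert f(y)\Vert>0)$. That is a negative statement, so it may be proved by contradiction without Markov's principle. This is exactly where your classical case split (``$f(y)$ is either $0$ or $f(x)$ with $\Vert f(x)\Vert=0$'') becomes constructively legitimate.

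Suppose $\Vert f(y)\Vert>0$. For any $n$, if $\lambda_n=1$ then, with $N\le n$ the first switch (so $N\geq2$), we get $y=z_{N-1}\in f^{-1}(0)$, which is a contradiction. Hence $\lambda_n=0$ for every $n$. Then $y=x$ and $\Vert f(x)\Vert<\delta_n$ for all $n$, so $f(x)=0$ and therefore $f(y)=0$, again a contradiction.

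Thus $y\in f^{-1}(0)$, and $x\in\mathord{\sim}f^{-1}(0)$ gives $\rho(x,y)>0$ at once. Your finite check then yields $\Vert f(x)\Vert>\delta_{M+1}/2>0$. Only Z-stability at $x$ is used, together with completeness to produce $y$.

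One smaller repair is needed. The step ``$\Vert f(x)\Vert<\delta_n$ for all $n$, so $\Vert f(x)\Vert=0$'' requires $\delta_n\to0$, which you did not stipulate. Shrinking $\delta$ preserves the Z-stability implication, so simply choose each $\delta_n\le2^{-n}$ as well as making the sequence decreasing.
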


%

%TCIMACRO{\TeXButton{noindent}{\noindent}}%
%BeginExpansion
\noindent
%EndExpansion
On the other hand, we have:

\begin{proposition}
\label{apr17p3}Every linear mapping of a normed space onto a Banach space is
well behaved \emph{\cite[Theorem 1]{BI1990}}.
\end{proposition}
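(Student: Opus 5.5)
The plan is to argue directly from the definition of well behaved. Let $T:X\rightarrow Y$ be linear from the normed space $X$ onto the Banach space $Y$. Let $x\in\mathord{\sim}T^{-1}(0)$, so that $\left\Vert x-z\right\Vert >0$ for every $z$ with $Tz=0$, and put $y=Tx$. We must show $\left\Vert y\right\Vert >0$. Since we only know $\lnot(y=0)$ classically (if $y=0$ then $x\in T^{-1}(0)$, and $\left\Vert x-x\right\Vert>0$ is absurd), we need a constructive mechanism that turns this into a positive lower bound. The natural tool is a binary sequence together with the completeness of $Y$ and the surjectivity of $T$. I will not use continuity of $T$, which is not assumed.

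First I construct an increasing binary sequence $(\lambda_n)$ with
\[
\lambda_n=0\Rightarrow\left\Vert y\right\Vert <2^{-n},\qquad \lambda_n=1\Rightarrow\left\Vert y\right\Vert >2^{-n-1}.
\]
It then suffices to produce an $N$ with $\lambda_N=1$. Next I define $y_n\in Y$ by $y_n=ny$ if $\lambda_n=0$, and $y_n=my$ if $\lambda_m=1-\lambda_{m-1}$ with $m\leq n$. Since $\left\Vert ny\right\Vert \leq n2^{-n}$ while $\lambda_n=0$, the sequence $(y_n)$ is Cauchy. By completeness it converges to some $w\in Y$, and surjectivity gives $u\in X$ with $Tu=w$. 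In the case where $\lambda$ never switches, $w=0$; if it switches at $m$, then $w=my$, so $u/m$ is a preimage of $y$ and $x-u/m\in T^{-1}(0)$.

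The key step, and the one I expect to be the main obstacle, is to choose a single point $z$ of $T^{-1}(0)$, defined without deciding which case holds, to which the hypothesis $\left\Vert x-z\right\Vert >0$ can be applied. It must be arranged so that $z=x$ in the never-switching case. The positive bound $\left\Vert x-z\right\Vert >1/K$ then forces the switch to occur before a computable index depending on $K$ and $\left\Vert u\right\Vert$. The naive choice $z=x-u/m$ is only defined after a switch, and it does not collapse to $x$ otherwise.

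So I would first try to modify the series defining $w$, for instance by building $w$ as a sum $\sum_n(\lambda_{n}-\lambda_{n-1})c_n y$ with rapidly growing weights $c_n$. The aim is that in every case the kernel element $z=x-u'$ is determined uniformly, where $u'$ is built from $u$ and the known data. Then $\left\Vert x-z\right\Vert=\left\Vert u'\right\Vert$ would be small whenever the switch occurs late, and exactly $0$ if it never occurs. Given $\left\Vert u'\right\Vert>1/K$, I would then check $\lambda_{N}$ for suitable $N$ depending on $K$ and $\left\Vert u\right\Vert$. If $\lambda_N=0$, the switch either never happens or happens late, and both contradict $\left\Vert u'\right\Vert>1/K$. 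Hence $\lambda_N=1$, and so $\left\Vert Tx\right\Vert >2^{-N-1}>0$.

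The delicate point is that $T$ may be unbounded, so the norm of a preimage cannot be controlled by the norm of its image. Every norm estimate must therefore come from the single preimage $u$ of the single limit $w$. Any $u'$ built from $u$ has to arise from $u$ by scalar operations determined by the $\lambda_n$, with no further preimages chosen. I am not yet sure this can be arranged, and if it cannot, I would look for a genuinely different argument.
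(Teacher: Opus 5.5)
Your setup is sound: the increasing binary sequence $(\lambda_n)$, the Cauchy sequence $(y_n)$ with limit $w$, and a single preimage $u$ of $w$. Here $w=0$ if $(\lambda_n)$ never switches and $w=my$ if it switches at $m$. You also locate the crux correctly: you need one element $z\in\ker T$, defined without deciding whether a switch ever occurs, that reduces to $x$ when no switch occurs.

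But you leave exactly this step unresolved (``I am not yet sure this can be arranged''). So no $z$ is produced, and the hypothesis $x\in\mathord{\sim}\ker T$ is never used. That step is the whole content of the result; the paper gives no proof and only cites Bridges and Ishihara. As it stands, the proposal is a plan with its key step missing, not a proof. Your suggested remedy of reweighting $w$ with faster-growing weights is also beside the point, since the weights $n$ you already have suffice.

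The missing idea is to carry the reciprocal weights in a separate \emph{real} sequence.
\begin{enumerate}
\item Put $\alpha_n=0$ if $\lambda_n=0$, and $\alpha_n=1/m$ if $\lambda_m=1-\lambda_{m-1}$ with $m\le n$. For $n>k$ you have $|\alpha_n-\alpha_k|\le 1/(k+1)$, so $\alpha=\lim_n\alpha_n\ge 0$ exists.
\item Then $z=x-\alpha u$ is a legitimate element of $X$, and it equals $x$ if no switch occurs.
\item To see that $Tz=0$ without a case distinction, look at each finite stage. Either $\lambda_n=0$, whence $\alpha_n y_n=0$ and $\Vert y\Vert<2^{-n}$, or $\lambda_n=1$, whence $\alpha_n y_n=y$. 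So $\Vert\alpha_n y_n-y\Vert<2^{-n}$ for all $n$. Letting $n\to\infty$ gives $\alpha w=y$, that is, $T(\alpha u)=\alpha Tu=Tx$ by linearity alone.
\item The hypothesis on $x$ now yields $\alpha\Vert u\Vert=\Vert x-z\Vert>0$. Hence $\alpha>0$, and so $\alpha>1/N$ for some positive integer $N$.
\item If $\lambda_N=0$, then $\alpha_n\le 1/(N+1)$ for every $n$, so $\alpha\le 1/(N+1)$, a contradiction. Therefore $\lambda_N=1$ and $\Vert Tx\Vert>2^{-N-1}>0$.
\end{enumerate}
This uses only the one preimage $u$ and no continuity of $T$, exactly as your constraints require.
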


From Propositions \ref{apr17p1} and \ref{apr17p2} we obtain:

\begin{proposition}
\label{apr22p1}If $T$ is a linear mapping of a normed space $X$ onto a Banach
space $Y$, and $\ker T$, the kernel of $T$, is located in $X$, then $T $ is
Z-stable on $X$.
\end{proposition}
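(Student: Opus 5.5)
The plan is to combine Proposition \ref{apr17p3} with Proposition \ref{apr17p1}. (The text says ``From Propositions \ref{apr17p1} and \ref{apr17p2}'', but well-behavedness here comes from Proposition \ref{apr17p3}, not from Proposition \ref{apr17p2}.)

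Take $f=T$, regarded as a map of the metric space $(X,\rho)$, with $\rho(x,x^{\prime})=\left\Vert x-x^{\prime}\right\Vert$, into the normed space $Y$. Then $f^{-1}(0)=\ker T$, and we check the three hypotheses of Proposition \ref{apr17p1}:

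\begin{itemize}
\item[$\bullet$] \emph{Inhabited.} Since $T$ is linear, $T0=0$, so $0\in\ker T$.

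\item[$\bullet$] \emph{Located.} This is an explicit hypothesis: $\rho(x,\ker T)$ exists for each $x\in X$.

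\item[$\bullet$] \emph{Well behaved.} Since $T$ maps the normed space $X$ onto the Banach space $Y$, Proposition \ref{apr17p3} says $T$ is well behaved. Explicitly, if $x\in\mathord{\sim}\ker T$, that is, $\left\Vert x-z\right\Vert >0$ for every $z\in\ker T$, then $Tx\neq0$ in the strong sense $\left\Vert Tx\right\Vert >0$.
\end{itemize}

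Proposition \ref{apr17p1} then gives Z-stability of $T$ at every point of $X$, which is the conclusion.

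No single step is a genuine obstacle, since the work is done inside Propositions \ref{apr17p1} and \ref{apr17p3}. The one thing to get right is the match of notions. The definition of ``well behaved'' uses the complement with the strong, positive-distance inequality; one should confirm that Theorem 1 of \cite{BI1990}, as quoted in Proposition \ref{apr17p3}, concludes $Tx\neq0$ with the same strong inequality, so that it feeds directly into Proposition \ref{apr17p1}.

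For a self-contained argument, Z-stability at $x$ can also be derived directly. Given $\varepsilon>0$, locatedness lets us decide either $\rho(x,\ker T)<\varepsilon$ or $\rho(x,\ker T)>\varepsilon/2$.

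\begin{itemize}
\item[$\bullet$] In the first case, any $\delta>0$ works, since some $z\in\ker T$ already satisfies $\left\Vert x-z\right\Vert <\varepsilon$.

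\item[$\bullet$] In the second case, $x\in\mathord{\sim}\ker T$, so well-behavedness gives $\left\Vert Tx\right\Vert >0$. Take $\delta=\left\Vert Tx\right\Vert$; then the hypothesis $\left\Vert Tx\right\Vert <\delta$ fails, and the implication holds vacuously.
\end{itemize}

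This is essentially the proof of Proposition \ref{apr17p1}, so citing that proposition suffices.
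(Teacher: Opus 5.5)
Your proof is correct and follows the paper's own route: $0\in\ker T$ gives inhabitedness, locatedness is assumed, Proposition \ref{apr17p3} supplies well-behavedness, and Proposition \ref{apr17p1} then yields Z-stability. You are also right that the paper's citation of Propositions \ref{apr17p1} and \ref{apr17p2} is a slip for Propositions \ref{apr17p1} and \ref{apr17p3}, and your direct case-split argument is a valid unpacking of Proposition \ref{apr17p1}.
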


In passing, we note that a linear mapping $T:X\rightarrow Y$ between normed
spaces is $Z$-stable if and only if it is an open mapping: that is, there
exists $r>0$ such that if $x\in X$ and $\left\Vert Tx\right\Vert \leq r$, then
$\left\Vert x\right\Vert \leq1$.%

%TCIMACRO{\TeXButton{medskip}{\medskip}}%
%BeginExpansion
\medskip
%EndExpansion

In the present article we are interested in the question:

\begin{quote}
\emph{If }$T$ \emph{is a nonzero\footnote{%
%TCIMACRO{\TeXButton{sf}{\normalfont\sf}}%
%BeginExpansion
\normalfont\sf
%EndExpansion
That is, there exists $x\in X$ such that $Tx\neq0$.} linear mapping of a
Banach space }$X$\emph{\ onto a Banach space} $Y$\emph{, under what conditions
is }$T$\emph{\ }uniformly\emph{\ Z-stable on }$X$\emph{?}(*)
\end{quote}

%

%TCIMACRO{\TeXButton{noindent}{\noindent}}%
%BeginExpansion
\noindent
%EndExpansion
Our answer will use a proposition whose constructive proof we include for the
sake of completeness of exposition.

\begin{proposition}
\label{apr20p1}Let $T$ be a nonzero linear mapping of a normed space $X$ into
a normed space $Y$. Then the following conditions are equivalent.

\begin{itemize}
\item[\emph{(a)}] There exists $r>0$ such that%
\[
\delta_{r}\equiv\inf\left\{  \left\Vert x\right\Vert :x\in X,\left\Vert
Tx\right\Vert \geq r\right\}
\]
exists and is positive;

\item[\emph{(b)}] $T$ is normable,\footnote{%
%TCIMACRO{\TeXButton{sf}{\normalfont\sf}}%
%BeginExpansion
\normalfont\sf
%EndExpansion
\textsf{Recall that a linear mapping }$T:X\rightarrow Y$\textsf{\ between
normed spaces is \textbf{normable} if its \textbf{operator norm} }$\left\Vert
T\right\Vert \equiv\sup\left\{  \left\Vert Tx\right\Vert :x\in X,\left\Vert
x\right\Vert \leq1\right\}  $\textsf{\ exists.}} with positive norm;

\item[\emph{(c)}] $\delta_{r}$ exists for each $r>0$.
\end{itemize}

%

%TCIMACRO{\TeXButton{noindent}{\noindent}}%
%BeginExpansion
\noindent
%EndExpansion
If any, and therefore all, of these conditions holds, then $\left\Vert
T\right\Vert =r\delta_{r}^{-1}$ for each $r>0$.
\end{proposition}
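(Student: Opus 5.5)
The plan is to prove (a)$\Rightarrow$(b)$\Rightarrow$(c), with (c)$\Rightarrow$(a) essentially immediate. The whole argument rests on the scaling relation $\delta_{s}=(s/r)\delta_{r}$, which follows from linearity: $\left\Vert T(tx)\right\Vert \geq tr$ if and only if $\left\Vert Tx\right\Vert \geq r$ (for $t>0$), and $\left\Vert tx\right\Vert =t\left\Vert x\right\Vert $. So if $\delta_{r}$ exists then $\delta_{s}$ exists for every $s>0$, and $\delta_{s}=sr^{-1}\delta_{r}$. Throughout I take care that every supremum and infimum is produced constructively, by checking the two defining properties: a bound, and approximation to within any $\varepsilon$.

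\emph{(a)$\Rightarrow$(b).} Fix $r$ with $\delta_{r}>0$ and put $c=r\delta_{r}^{-1}$. To see that $c$ is an upper bound, take $\left\Vert x\right\Vert \leq1$ and suppose $\left\Vert Tx\right\Vert >c$. Scale $x$ by $t=r/\left\Vert Tx\right\Vert $, so that $\left\Vert T(tx)\right\Vert =r$. Then $\left\Vert tx\right\Vert \leq t<r/c=\delta_{r}$, which contradicts the definition of $\delta_{r}$. Hence $\lnot(\left\Vert Tx\right\Vert >c)$, that is, $\left\Vert Tx\right\Vert \leq c$. For approximation, given $\varepsilon>0$ choose $x$ with $\left\Vert Tx\right\Vert \geq r$ and $\left\Vert x\right\Vert <\delta_{r}+\eta$, where $\eta$ is small. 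Since $\left\Vert x\right\Vert \geq\delta_{r}>0$ we may normalise, and $y=x/\left\Vert x\right\Vert $ satisfies $\left\Vert Ty\right\Vert \geq r/(\delta_{r}+\eta)>c-\varepsilon$. So $\left\Vert T\right\Vert =c>0$.

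\emph{(b)$\Rightarrow$(c).} Let $\left\Vert T\right\Vert >0$ and $r>0$; I claim that $\delta_{r}=r/\left\Vert T\right\Vert $. The lower bound is direct: if $\left\Vert Tx\right\Vert \geq r$ then $r\leq\left\Vert T\right\Vert \left\Vert x\right\Vert $, so $\left\Vert x\right\Vert \geq r/\left\Vert T\right\Vert $. For approximation, given $\varepsilon$ pick $u$ with $\left\Vert u\right\Vert \leq1$ and $\left\Vert Tu\right\Vert >\left\Vert T\right\Vert -\eta>0$, and set $x=(r/\left\Vert Tu\right\Vert )u$. Then $\left\Vert Tx\right\Vert =r$ and $\left\Vert x\right\Vert \leq r/(\left\Vert T\right\Vert -\eta)$, which is within $\varepsilon$ of $r/\left\Vert T\right\Vert $ once $\eta$ is small. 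Note also that $\delta_{r}>0$ here, and this gives the final formula $\left\Vert T\right\Vert =r\delta_{r}^{-1}$ for every $r>0$.

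\emph{(c)$\Rightarrow$(a).} This is where care is needed, since (c) asserts existence but not positivity. I would use that $T$ is nonzero: pick $x_{0}$ with $Tx_{0}\neq0$, and choose $r=\left\Vert Tx_{0}\right\Vert $ so that the defining set of $\delta_{r}$ is inhabited. To show $\delta_{r}>0$, I would first prove that $\left\Vert T\right\Vert $ exists, using existence of $\delta_{s}$ for all $s$. The upper bound by $r/\delta_{r}$ is not obviously available without positivity, so instead I argue approximately. Since $\delta_{1}$ exists and the defining set of $\delta_{1}$ is inhabited, by $x_{0}/\left\Vert Tx_{0}\right\Vert $, $\delta_{1}$ is a finite real, and either $\delta_{1}>0$ or $\delta_{1}<\varepsilon$ for a small $\varepsilon$. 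In the second case there are $x$ with $\left\Vert Tx\right\Vert \geq1$ and $\left\Vert x\right\Vert $ tiny. Scaling, there are then $x$ with $\left\Vert x\right\Vert \leq1$ and $\left\Vert Tx\right\Vert $ huge, and by scaling again $\delta_{s}<\varepsilon s$ for all $s$. Deriving a contradiction from ``$\delta_{1}=0$'' alone is not enough constructively, so this last step is the main obstacle. I would try to get positivity from (c) itself: if $\delta_{1}=0$ then $\delta_{s}=0$ for all $s$, so the statement ``$\delta_{r}$ exists'' carries only the information $\delta_{r}\geq0$. Hence I expect that (c) should be read, as in the closing sentence of the proposition, with $\delta_{r}>0$ implicit; with that reading (c)$\Rightarrow$(a) is trivial and the cycle closes.
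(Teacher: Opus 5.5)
Your arguments for (a)$\Rightarrow$(b) and (b)$\Rightarrow$(c) are essentially the paper's: you scale to bound $\Vert Tx\Vert$ by $r\delta_r^{-1}$ and rescale a near-minimiser to approximate the supremum, and conversely you rescale a near-maximiser to get $\delta_r=r\Vert T\Vert^{-1}$. Your reading of (c) as including $\delta_r>0$ is what the paper tacitly uses when it calls (c)$\Rightarrow$(a) trivial, and your suspicion is justified, since without positivity the proposition is false even classically: on the finitely supported real sequences with the sup norm, the functional $Tx=\sum_n nx_n$ satisfies $\Vert T((r/n)e_n)\Vert=r$ and $\Vert(r/n)e_n\Vert=r/n$ for the unit vectors $e_n$, so $\delta_r$ exists and equals $0$ for every $r>0$, yet $T$ is unbounded and hence not normable.
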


\begin{proof}
Suppose that $r$ and $\delta_{r}\ $exist as in (i). If $x\in X$ and
$\left\Vert x\right\Vert <1$, then $\left\Vert T(\delta_{r}x)\right\Vert
\not \geq r$, so $\left\Vert Tx\right\Vert =\left\Vert T(\delta_{r}%
x)\right\Vert \delta_{r}^{-1}\leq r\delta_{r}^{-1}$. It readily follows that
$T$ is a bounded linear mapping with bound $r\delta_{r}^{-1}$. To prove that
$r\delta_{r}^{-1}$ is the norm of $T$, let $0<\varepsilon<r\delta_{r}^{-1}$.
Then $0<1-r^{-1}\delta_{r}\varepsilon<1$ and therefore%
\[
\frac{\delta_{r}}{1-r^{-1}\delta_{r}\varepsilon}>\delta_{r}.
\]
Choose, in turn, $t$ such that%
\[
0<t<\frac{\delta_{r}}{1-r^{-1}\delta_{r}\varepsilon}-\delta_{r}%
\]
and $\xi\in X$ such that $\left\Vert T\xi\right\Vert \geq r$ and $\left\Vert
\xi\right\Vert <\delta_{r}+t$. With%
\[
x=\frac{r}{\left(  \delta_{r}+t\right)  \left\Vert T\xi\right\Vert }\xi
\]
we have%
\[
\left\Vert x\right\Vert =\frac{r}{\left\Vert T\xi\right\Vert }\cdot
\frac{\left\Vert \xi\right\Vert }{\delta_{r}+t}<1
\]
and%
\[
\left\Vert Tx\right\Vert =\frac{r}{\delta_{r}+t}>\frac{r(1-r^{-1}\delta
_{r}\varepsilon)}{\delta_{r}}=r\delta_{r}^{-1}-\varepsilon.
\]
Since $\varepsilon>0$ is arbitrary, it follows that%
\[
\left\Vert T^{-1}\right\Vert =\sup\left\{  \left\Vert x\right\Vert :x\in
X,\left\Vert Tx\right\Vert \leq1\right\}
\]
exists and equals $r\delta_{r}^{-1}$.

Now suppose that $T$ is normable and $\left\Vert T\right\Vert >0$. For each
$r>0$, if $\left\Vert Tx\right\Vert \geq r$, then $\left\Vert x\right\Vert
\geq r\left\Vert T\right\Vert ^{-1}$. On the other hand, given $\varepsilon
>0$, pick, in turn, $s$ such that $0<s<\left\Vert T\right\Vert $ and%
\[
\frac{1}{\left\Vert T\right\Vert -s}<\frac{1}{\left\Vert T\right\Vert }%
+r^{-1}\varepsilon
\]
and $\xi\in X$ with $\left\Vert \xi\right\Vert =1$ and $\left\Vert
T\xi\right\Vert >\left\Vert T\right\Vert -s$. Setting%
\[
x=\frac{r}{\left\Vert T\right\Vert -s}\xi,
\]
we have%
\[
\left\Vert Tx\right\Vert =\frac{r\left\Vert T\xi\right\Vert }{\left\Vert
T\right\Vert -s}>r
\]
and%
\[
\left\Vert x\right\Vert =\frac{r}{\left\Vert T\right\Vert -s}<r\left(
\left\Vert T\right\Vert ^{-1}+r^{-1}\varepsilon\right)  =r\left\Vert
T\right\Vert ^{-1}+\varepsilon\text{.}%
\]
Since $r$ and $\varepsilon$ are arbitrary, we conclude that for each $r>0$,
$\delta_{r}$ exists and equals $r\left\Vert T\right\Vert ^{-1}$. Thus (b)
implies (c), which trivially implies (a).
\end{proof}

%

%TCIMACRO{\TeXButton{medskip}{\medskip}}%
%BeginExpansion
\medskip
%EndExpansion

We now have some more definitions applicable to a mapping $f:X\rightarrow Y$
between metric spaces. First, we say that $f$ is

\begin{itemize}
\item[$\bullet$] \emph{one-one} if $x=x^{\prime}$ whenever $x,x^{\prime}\in X$
and $f((x)=f(x^{\prime})$; and

\item[$\bullet$] \emph{injective }if $f(x)\neq f(x^{\prime})$ whenever
$x,x^{\prime}\in X$ and $x\neq x^{\prime}$.
\end{itemize}

%

%TCIMACRO{\TeXButton{noindent}{\noindent}}%
%BeginExpansion
\noindent
%EndExpansion
Constructively, one-one is weaker than injective, but every one-one linear
mapping of a normed space onto a Banach space is injective \cite[Cor.
0]{BI1990}.

On the other hand, $f$ is said to be \emph{sequentially continuous }if for
each sequence $\left(  x_{n}\right)  _{n\geq1}$ converging to $x\in X$, the
sequence $\left(  f(x_{n})\right)  _{n\geq1}$ converges to $f(x)$. In
\cite[Thm. 4]{Ish1992}, Ishihara proved that even when $X$ is complete and
separable, the (classically true) equivalence between sequential and pointwise
continuity is constructively equivalent to a certain boundedness condition,
\textbf{BD}-$\mathbf{N}$, which is now known to be unprovable within BISH
\cite{Lietz}.\footnote{%
%TCIMACRO{\TeXButton{sf}{\normalfont\sf}}%
%BeginExpansion
\normalfont\sf
%EndExpansion
However, \textbf{BD}-$\mathbf{N}$ is provable classically, intuitionistically,
and, as Ishihara has shown \cite[Propn. 4]{Ish1992}, within recursive
constructive mathematics.}

In view of this, it is perhaps not so surprising that sequential continuity
replaces the classical boundedness hypothesis and conclusion in various
constructive versions of Banach's open mapping theorem; see \cite[Cor.
1]{Ish1994}, \cite[Thm 4]{BI1998}, and \cite[6.6.4]{BVtech}.\footnote{%
%TCIMACRO{\TeXButton{sf}{\normalfont\sf}}%
%BeginExpansion
\normalfont\sf
%EndExpansion
The following restricted version of Banach's open mapping theorem is
constructively equivalent to \textbf{BD}-$\mathbf{N}$ and hence not provable
within BISH: \ If $T$ is a nonzero bounded linear mapping of a separable
Hilbert space $H$ into itself such that $T^{\ast}$ exists and $T(H)$ is
complete, then $T$ is an open mapping \cite[Thm 5]{BI1998}.} In the same vein
we have Ishihara's constructive version of \emph{Banach's inverse mapping
theorem}:

\begin{quote}
\emph{If }$T$\emph{\ is a one-one, sequentially continuous, linear mapping of
a separable Banach space }$X$\emph{\ onto a Banach space }$Y$\emph{, then the
linear mapping }$T^{-1}$\emph{\ is sequentially continuous} (\cite[Thm
1]{Ish1994}; see also \cite[Thm 3.4]{IshV}).\footnote{%
%TCIMACRO{\TeXButton{sf}{\normalfont\sf}}%
%BeginExpansion
\normalfont\sf
%EndExpansion
Ishihara \cite[Thm 21]{Ish2001} has shown that the classical form of each of
the following functional-analytic theorems is constructively equivalent to
\textbf{BD-}$\mathbf{N}$ and is therefore not provable within BISH: the open
mapping theorem, Banach's inverse function theorem, the closed graph theorem,
the uniform boundedness theorem, the Banach-Steinhaus theorem, the
Hellinger-Toeplitz theorem.}
\end{quote}

\begin{proposition}
\label{apr21p1}Let $X$ be a separable Banach space, and $T$ a nonzero,
one-one, sequentially continuous, linear mapping of $X$ onto a Banach space
$Y$. For each $r>0$ let%
\[
C_{r}\equiv\left\{  \left\Vert Tx\right\Vert :x\in X,\left\Vert x\right\Vert
\geq r\right\}  .
\]
Then the following conditions are equivalent.

\begin{enumerate}
\item[\emph{(i)}] $\delta_{r}\equiv\ \inf C_{r}$ exists for some $r>0$;

\item[\emph{(ii)}] The linear mapping $T^{-1}$ is normable.
\end{enumerate}

%

%TCIMACRO{\TeXButton{noindent}{\noindent}}%
%BeginExpansion
\noindent
%EndExpansion
If either condition holds, then $\left\Vert T\right\Vert >0$ and for each
$r>0$, $\delta_{r}$ exists and equals $r\left\Vert T\right\Vert ^{-1}$.
\end{proposition}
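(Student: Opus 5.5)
The plan is to reduce everything to Proposition \ref{apr20p1}, applied to the linear mapping $S\equiv T^{-1}:Y\rightarrow X$ instead of to $T$. Since $T$ is one-one and onto, $S$ is a well-defined linear map of $Y$ onto $X$. It is nonzero: $T$ is nonzero, so there is $x$ with $Tx\neq0$; then $x\neq0$, and $S(Tx)=x\neq0$. The key observation is that, writing $y=Tx$, we have
\[
\left\{ \left\Vert y\right\Vert :y\in Y,\left\Vert Sy\right\Vert \geq r\right\}
=\left\{ \left\Vert Tx\right\Vert :x\in X,\left\Vert x\right\Vert \geq r\right\}
=C_{r}.
\]
So the quantity $\delta_{r}$ of the present proposition is exactly the quantity $\delta_{r}$ of Proposition \ref{apr20p1} computed for $S$.

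For (ii)$\Rightarrow$(i), I need normability of $S$ with positive norm, so that Proposition \ref{apr20p1}((b)$\Rightarrow$(c)) applies. Positivity of $\left\Vert S\right\Vert$ is immediate. Take $y_{0}=Tx$ with $x\neq0$. Then $\left\Vert S\right\Vert \geq\left\Vert Sy_{0}\right\Vert /\left\Vert y_{0}\right\Vert >0$, where $\left\Vert y_{0}\right\Vert>0$ holds because $Tx\neq0$. Hence $\delta_{r}$ exists for every $r>0$ and equals $r\left\Vert S\right\Vert ^{-1}$. This is the asserted formula, with the norm read as that of $T^{-1}$.

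For (i)$\Rightarrow$(ii), Proposition \ref{apr20p1}((a)$\Rightarrow$(b)) requires $\delta_{r}>0$, not merely its existence. This positivity is the main obstacle, and it is where separability and sequential continuity enter, via Ishihara's inverse mapping theorem: $S$ is sequentially continuous. Classically one would argue by contradiction from a sequence $x_{n}$ with $\left\Vert x_{n}\right\Vert \geq r$ and $Tx_{n}\rightarrow0$. Constructively, I will use a binary sequence instead.

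\begin{itemize}
\item[$\bullet$] \emph{Binary sequence.} Choose an increasing binary sequence $(\lambda_{n})$ such that $\lambda_{n}=0\Rightarrow\delta_{r}>1/(2n)$ and $\lambda_{n}=1\Rightarrow\delta_{r}<1/n$. This is possible because $\delta_{r}$ is a real number.

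\item[$\bullet$] \emph{Auxiliary sequence.} If $\lambda_{n}=0$, set $z_{n}=0$. If $\lambda_{n}=1$, pick $x_{n}$ with $\left\Vert x_{n}\right\Vert \geq r$ and $\left\Vert Tx_{n}\right\Vert <1/n$, and set $z_{n}=x_{n}$.

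\item[$\bullet$] \emph{Apply continuity of $S$.} In either case $\left\Vert Tz_{n}\right\Vert <1/n$, so $Tz_{n}\rightarrow0$. By sequential continuity of $S$, $z_{n}=S(Tz_{n})\rightarrow0$.

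\item[$\bullet$] \emph{Conclude positivity.} Pick $N$ with $\left\Vert z_{N}\right\Vert <r$. If $\lambda_{N}=1$, then $\left\Vert z_{N}\right\Vert =\left\Vert x_{N}\right\Vert \geq r$, a contradiction. Hence $\lambda_{N}=0$, and so $\delta_{r}>1/(2N)>0$.
\end{itemize}

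Now Proposition \ref{apr20p1} gives that $S=T^{-1}$ is normable, with $\left\Vert T^{-1}\right\Vert =r\delta_{r}^{-1}>0$. By its part (c), $\delta_{s}$ then exists for every $s>0$ and equals $s\left\Vert T^{-1}\right\Vert ^{-1}$. This yields the closing assertions of the proposition.
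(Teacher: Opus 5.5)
Your proof is correct, and its skeleton is the paper's. You rewrite $C_r$ as $\{\Vert y\Vert : y\in Y,\ \Vert T^{-1}y\Vert\geq r\}$, apply Proposition~\ref{apr20p1} to $T^{-1}$, and isolate $\delta_r>0$ as the crux, settled by a binary sequence and Ishihara's inverse mapping theorem.

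Where you differ is in how positivity is extracted. The paper's route is:
\begin{itemize}
\item it lets $\lambda_n=0$ signal that $\delta_r$ is small, and picks $x_n$ with $\Vert x_n\Vert\geq r$ and $\Vert Tx_n\Vert<2^{-n}$;
\item it freezes the sequence once $\lambda$ flips, shows $(Tx_n)$ is Cauchy, and pulls the limit back to some $x_\infty$ with $\Vert x_\infty\Vert\geq r$;
\item it then needs injectivity of $T$ (the Bridges--Ishihara result that a one-one linear map onto a Banach space is injective) to get $Tx_\infty\neq0$, and hence an index with $\lambda_\nu=1$.
\end{itemize}
You instead pad with zeros at the indices where $\delta_r$ is known to be large, so that $Tz_n\rightarrow0$ outright. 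Sequential continuity of $T^{-1}$ at $0$ then gives $N$ with $\Vert z_N\Vert<r$, and decidability of $\lambda_N$ finishes. This is precisely a constructive repair of the Markov-principle argument the paper rejects. It is also leaner: no Cauchy estimate, no limit point, and no appeal to injectivity of $T$ (one-one enters only as a hypothesis of Ishihara's theorem).

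Three small points.
\begin{itemize}
\item Delete the word \emph{increasing}. With your orientation, a monotone sequence having the stated properties need not exist. If $\delta_r=0.3$, then $\lambda_1=1$ is forced, and monotonicity would give $\lambda_4=1$, i.e.\ $\delta_r<1/4$. You never use monotonicity anyway.
\item The step from $Tx\neq0$ to $x\neq0$ is strong extensionality of $T$ and deserves a line. Your own device supplies it. Choose $\lambda_n$ with $\lambda_n=0\Rightarrow\Vert x\Vert<n^{-2}$ and $\lambda_n=1\Rightarrow\Vert x\Vert>(2n^2)^{-1}$, and set $x_n=nx$ or $0$ accordingly, so that $x_n\rightarrow0$. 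Since $\Vert Tx_n\Vert=n\Vert Tx\Vert$ whenever $\lambda_n=0$, sequential continuity of $T$ forces some $\lambda_n=1$.
\item Your reading of the closing assertion as $\Vert T^{-1}\Vert>0$ and $\delta_r=r\Vert T^{-1}\Vert^{-1}$ is the right one. The $\Vert T\Vert$ in the statement is a slip: for $T=2I$ on $\mathbb{R}$ one has $\delta_r=2r$, whereas $r\Vert T\Vert^{-1}=r/2$.
\end{itemize}
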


\begin{proof}
Suppose that $r>0$ and $\delta_{r}$ exists. Noting that%
\[
C_{r}=\left\{  \left\Vert y\right\Vert :y\in Y,\left\Vert T^{-1}y\right\Vert
\geq r\right\}  ,
\]
we see from Proposition \ref{apr20p1} that to prove (ii), it will suffice to
show that $\delta_{r}>0$. To that end, construct an increasing binary sequence
$\left(  \lambda_{n}\right)  _{n\geq1}$ such that%
\begin{align*}
\lambda_{n}=0  &  \Rightarrow\delta_{r}<2^{-n},\\
\lambda_{n}=1  &  \Rightarrow\delta_{r}>2^{-n-1}.
\end{align*}
We may assume that $\lambda_{1}=0$. If $\lambda_{n}=0$, choose $x_{n}\in
C_{r}$ such that $\left\Vert Tx_{n}\right\Vert <2^{-n}$. If $\lambda
_{N}=1-\lambda_{N-1}$, set $x_{n}=x_{N-1}$ for all $n\geq N$. It is routine to
prove that if $m>n$, then $\left\Vert Tx_{m}-Tx_{n}\right\Vert <2^{-n+1}$;
whence $\left(  Tx_{n}\right)  _{n\geq1}$ is a Cauchy sequence in the complete
space $Y$. Thus there exists $x_{\infty}\in X$ such that $Tx_{n}\rightarrow
Tx_{\infty}$. By Banach's inverse mapping theorem, $T^{-1} $ is sequentially
continuous on $Y$, so $x_{n}\rightarrow x_{\infty}$. Then $\left\Vert
x_{\infty}\right\Vert =\lim_{n\rightarrow\infty}\left\Vert x_{n}\right\Vert
\geq r$ and therefore, since $T$ is injective, $Tx_{\infty}\neq0$. Pick a
positive integer $N$ such that $\left\Vert Tx_{\infty}\right\Vert >2^{-N}$.
The sequential continuity of $T$ ensures that there exists $\nu\geq N$ with
$\left\Vert Tx_{\nu}\right\Vert >2^{-N}>2^{-\nu}$. Then $\lambda_{\nu}=1$ and
therefore $\delta_{r}>2^{-\nu-1}>0$. Hence (i) implies (ii).

Conversely, since $T$ is nonzero and injective, $T^{-1}$ is nonzero, so if
(ii) holds, then $\left\Vert T^{-1}\right\Vert >0$; in which case, by
Proposition \ref{apr20p1}, $\delta_{r}=r\left\Vert T\right\Vert ^{-1}$ for all
$r>0$. In particular, (ii) implies (i).
\end{proof}

%

%TCIMACRO{\TeXButton{medskip}{\medskip}}%
%BeginExpansion
\medskip
%EndExpansion

It may be asked why we didn't simply use the following argument to prove that
$\delta_{r}>0$ in the preceding proposition:

\begin{quote}
Suppose that $\delta_{r}=0$; then there exists a sequence $\left(
x_{n}\right)  _{n\geq1}$ in $C_{r}$ such that $Tx_{n}\rightarrow0$. By the
Banach inverse function theorem, $x_{n}\rightarrow0$, so there exists $n$ such
that $\left\Vert x_{n}\right\Vert <r$, a contradiction from which we conclude
that $\lnot(\delta_{r}=0)$ and therefore $\delta_{r}>0$.
\end{quote}

%

%TCIMACRO{\TeXButton{noindent}{\noindent}}%
%BeginExpansion
\noindent
%EndExpansion
The problem with this argument is that passing from $\lnot(a=0)$ to $\left(
a>0\right)  $ requires us to accept \emph{Markov's principle,}

\begin{quote}
\textbf{MP: }For each binary sequence $\left(  a_{n}\right)  _{n\geq1}$ such
that $\lnot\forall_{n}(a_{n}=0)$, there exists $n$ such that $a_{n}=1$.
\end{quote}

%

%TCIMACRO{\TeXButton{noindent}{\noindent}}%
%BeginExpansion
\noindent
%EndExpansion
Although MP is acceptable to the Markov school of recursive constructive
mathematics \cite{Kushner}, it represents an unbounded search, and from the
point of view of BISH is therefore seen as essentially nonconstructive.

Now we have our answer to the question (*) posed towards the start of our
paper. Here, $\rho$ denotes the metric induced by the norm on a normed linear space.

\begin{theorem}
\label{apr21c1}Let $X$ be a separable Banach space, and $T$ a nonzero,
sequentially continuous, linear mapping of $X$ onto a Banach space $Y$ such
that $\ker T$ is located in $X$. Suppose there exists $r>0$ such that%
\[
\delta_{r}\equiv\inf\left\{  \left\Vert Tx\right\Vert :x\in X,\rho(x,\ker
T)\geq r\right\}
\]
exists. Then $T$ is uniformly Z-stable on $X$.
\end{theorem}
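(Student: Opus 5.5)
Pass to the quotient. Write $K=\ker T$. Because $K$ is located, the quotient $\widetilde{X}=X/K$ is a normed space with norm $\Vert x+K\Vert=\rho(x,K)$. It is complete because $X$ is, and it is separable because $X$ is. Define $\widetilde{T}(x+K)=Tx$. Then $\widetilde{T}$ is a linear map of $\widetilde{X}$ onto $Y$. It is nonzero because $T$ is.

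First we check that $\widetilde{T}$ is one-one. If $\widetilde{T}(x+K)=0$, then $Tx=0$, so $x\in K$ and $x+K=0$.

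Next we check that $\widetilde{T}$ is sequentially continuous, which needs a lifting argument. Suppose $x_n+K\to x+K$. Pass to a subsequence with $\rho(x_{n_k}-x,K)<2^{-k}$, and choose $z_k\in K$ with $\Vert x_{n_k}-x-z_k\Vert<2^{-k}$. Then $x_{n_k}-z_k\to x$ in $X$, and the sequential continuity of $T$ gives $Tx_{n_k}=T(x_{n_k}-z_k)\to Tx$. To handle the full sequence rather than a subsequence, I would use the standard device of interleaving: if $\widetilde{T}(x_n+K)\not\to Tx$, apply the above to a subsequence along which the values stay away from $Tx$. This needs care constructively, so I would do it directly instead. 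Given $n$, choose $z_n\in K$ with $\Vert x_n-x-z_n\Vert<\rho(x_n-x,K)+1/n$. Then $x_n-z_n\to x$ in $X$, and hence $Tx_n=T(x_n-z_n)\to Tx$. This direct version avoids the subsequence issue entirely.

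Now apply Proposition~\ref{apr21p1} to $\widetilde{T}$. Its set $C_r$ is exactly
\[
\left\{\Vert Tx\Vert : x\in X,\ \rho(x,K)\geq r\right\},
\]
so the hypothesis says that $\delta_r=\inf C_r$ exists for this $r$. The proposition therefore gives that $\widetilde{T}^{-1}$ is normable, and that $\delta_r>0$ for every $r>0$. In fact its proof shows $\delta_r>0$ for our given $r$. The equality $\delta_s=s\Vert\widetilde{T}^{-1}\Vert^{-1}$ holds by Proposition~\ref{apr20p1} applied to $\widetilde{T}^{-1}$, and it yields $\rho(x,K)\leq\Vert\widetilde{T}^{-1}\Vert\,\Vert Tx\Vert$ for all $x\in X$.

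Finally we read off uniform Z-stability. Given $\varepsilon>0$, take $\delta=\varepsilon/(2\Vert\widetilde{T}^{-1}\Vert+1)$. If $\Vert Tx\Vert<\delta$, then $\rho(x,K)<\varepsilon/2<\varepsilon$. Since $\rho(x,K)$ is an infimum, there is $z\in K$ with $\rho(x,z)<\varepsilon$, as required.

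The main obstacle is the verification of the hypotheses of Proposition~\ref{apr21p1}, chiefly that $\widetilde{X}$ is a separable Banach space on which $\widetilde{T}$ is sequentially continuous. Completeness of the quotient by a located subspace is standard: lift a rapidly Cauchy sequence using approximate nearest points in $K$.
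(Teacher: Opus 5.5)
Your proof is correct and is essentially the paper's argument: you pass to $X/\ker T$ and apply Proposition~\ref{apr21p1} to the induced one-one map $\widetilde{T}$, whose set $C_r$ is exactly the set in the hypothesis, to get $\widetilde{T}^{-1}$ normable; you then read off $\rho(x,\ker T)\leq\Vert\widetilde{T}^{-1}\Vert\,\Vert Tx\Vert$, and you check separability of the quotient and sequential continuity of $\widetilde{T}$ more explicitly than the paper does. The one point you leave implicit is that $\ker T$ is closed, which is needed for $\widetilde{T}$ to be well defined on $X/\ker T$ (where $x+K=y+K$ means $\rho(x-y,K)=0$); the paper gets this from the sequential continuity of $T$, and your lifting argument applied to a constant sequence gives the same thing.
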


\begin{proof}
Since $T$ is sequentially continuous, $\ker T$ is closed in $X$. The quotient
space $X/\ker T$ is therefore a Banach space relative to the quotient norm%
\[
\left\Vert x\right\Vert _{X/\ker T}\equiv\rho(x,\ker T)\ \ \ (x\in X)
\]
\cite[2.3.8]{BVtech}. Regarded as a mapping from $X/\ker T$ onto $Y$, $T$ is
nonzero, injective, sequentially continuous, and linear. Since%
\[
\delta_{r}=\inf\left\{  \left\Vert y\right\Vert :y\in Y,\left\Vert
T^{-1}y\right\Vert _{X/\ker T}\geq r\right\}  ,
\]
it follows from Proposition \ref{apr21p1} that the linear mapping
$T^{-1}:Y\rightarrow X/\ker T$ is normable, with positive norm $r\delta
_{r}^{-1}$. If $x\in X$, $\varepsilon>0$, and $\left\Vert Tx\right\Vert
<r^{-1}\delta_{r}\varepsilon$, then%
\[
\rho(x,\ker T)=\left\Vert x\right\Vert _{X/\ker T}=\left\Vert T^{-1}%
(Tx)\right\Vert _{X/\ker T}\leq\left\Vert T^{-1}\right\Vert \left\Vert
Tx\right\Vert \leq r\delta_{r}^{-1}\left\Vert Tx\right\Vert <\varepsilon.
\]
Thus $T$ is uniformly Z-stable on $X$.
\end{proof}

%

%TCIMACRO{\TeXButton{medskip}{\medskip}}%
%BeginExpansion
\medskip
%EndExpansion

We have two final comments:

\begin{enumerate}
\item If $T$ is a nonzero, one-one, linear mapping of a finite-dimensional
normed space $X$ onto a (perforce finite-dimensional) normed space $Y$, and
$\ker T$ is located in $X\,$, then $X/\ker T$ has positive finite dimension,
and the one-one linear mapping $T^{-1}$ of $Y$ onto $X/\ker T$ is normable
\cite[4.1.8]{BVtech}, with positive norm; whence (see the second-last sentence
of the proof of Theorem \ref{apr21c1}) $T$ is uniformly Z-stable on $X$.

\item The classical\footnote{%
%TCIMACRO{\TeXButton{sf}{\normalfont\sf}}%
%BeginExpansion
\normalfont\sf
%EndExpansion
But \emph{not} the constructive \cite[2.1.18]{BVtech}.} least-upper-bound
principle makes the assumptions of separability, locatedness, and the
existence of $\delta_{r}$ redundant in Corollary \ref{apr20p1} and leads to
the \emph{classical} theorem: Every a bounded linear mapping of a Banach space
onto a Banach space is uniformly Z-stable.
\end{enumerate}

%

%TCIMACRO{\TeXButton{bigskip}{\bigskip}}%
%BeginExpansion
\bigskip
%EndExpansion

\vfill

\begin{flushright}
\texttt{{\small dsbridges 010526}}
\end{flushright}

\label{end}
\end{document}